\newcommand{\la}{\lambda}
\newcommand{\N}{{\mathbb{N}}}
\newcommand{\ds}{\displaystyle }
\newtheorem{theorem}{Theorem}[section]
\newtheorem{prop}[theorem]{Proposition}
\newtheorem{cor}[theorem]{Corollary}
\newtheorem{lemma}[theorem]{Lemma}
\newtheorem{rem}[theorem]{Remark}
\begin{document}

\title[Unique Path Partitions]
{Unique path partitions:  \\ Characterization and Congruences}

\author{Christine Bessenrodt}

\address{Institut f\"ur Algebra, Zahlentheorie und Diskrete Mathematik
\\
Leibniz Universit\"at Hannover, Welfengarten 1
\\
D-30167 Hannover, Germany }
\email{bessen@math.uni-hannover.de}

\author{J\o rn B. Olsson}
\address{
Department of Mathematical Sciences, University of Copenhagen\\
Universitetsparken 5,DK-2100 Copenhagen \O, Denmark
}
\email{olsson@math.ku.dk}

\author{James A. Sellers}
\address{Department of Mathematics, Penn State University \\
104 McAllister Building, University Park, PA 16802, USA
}
\email{sellersj@math.psu.edu}

\date{September 3, 2012}

\keywords{binary partitions, unique path partitions, rim hooks,
symmetric group, character values, congruences}
\subjclass{05A17, 11P83, 20C30}

\begin{abstract}
We give a complete classification of the unique path partitions and
study congruence properties of the function which enumerates such partitions.
\end{abstract}

\maketitle

\medskip

\section{Introduction}

\medskip

The famous Murnaghan-Nakayama formula gives a combinatorial
rule for computing the value of the irreducible character  of the
symmetric groups $S_n$ labelled by
the partition $\la$ on the conjugacy class labelled by a partition $\mu$ (see \cite{JK}).
This value is the weighted sum over the $\mu$-paths in $\la$, as
defined below, where
the weight is a sign corresponding to the sum of the leg lengths
of the rim hooks removed along the path.

If  $\mu=(a_1,a_2,...,a_k)$, with $a_1\geq a_2\geq \ldots \geq a_k >0$,
and $\la$ are  partitions of~$n$,
 then a $\mu$-path in $\la$ is a sequence of partitions, $\la=\la_0,
 \la_1,..., \la_k=(0)$,   where for $i=1,...,k$ the partition $\la_i$ is
obtained by removing an $a_i$-hook from $\la_{i-1}$.
As in \cite{OlSC},  we call $\mu$ a {\it unique path partition for $\la$} 
(or {\it up}-partition for $\la$ for short) if the number of
$\mu$-paths in $\la$ is at most~1.
We call $\mu$ a {\it up}-partition if it is a {\it up}-partition
for all partitions of~$n$.

Thus, a {\it up}-partition $\mu$ labels a conjugacy class where
all non-zero irreducible character values are $1$ or $-1$,
i.e., they are {\it sign partitions} as defined in \cite{OlSC}.
By \cite[7.17.4]{EC2}, the sign partitions $\mu$ are exactly those for which
the expansion of the corresponding power sum symmetric function
into Schur functions is
multiplicity-free.

Note that not every sign partition is a {\it up}-partition as cancellation may occur.
For example, the partition $(3,2,1)$ is a sign partition, but not a
{\it up}-partition, since there are two  $(3,2,1)$-paths in
the partition $(3,2,1)$.

In this paper, we accomplish three goals.  First, we provide
an explicit classification of the unique path partitions
in terms of partitions we call {\it strongly decreasing}.  We then discuss numerous connections between {\it up}-parti\-tions and certain types of binary partitions.  Such connections are truly beneficial; they led us to the development of a generating function for, and a recurrence satisfied by $u(n)$, the number of {\it up}-partitions of the positive integer $n$.  Thanks to this link between {\it up}-partitions and restricted binary partitions, we were encouraged to consider the arithmetic properties of $u(n)$.  (Such a motivation is natural based on the literature that already exists on congruence properties satisfied by binary partitions.  Indeed, Churchhouse \cite{CH} initiated the study of congruence properties satisfied by the unrestricted binary partition function in the late 1960's.  This work was further extended by R\o dseth and Sellers~\cite{RS}.)
We close this paper by proving a number of congruence relations satisfied by $u(n)$ modulo powers of~2.

\section{The classification of {\it up}-partitions}

We now collect the facts necessary for classifying
the {\it up}-partitions in an elegant fashion.
As usual, we gather equal parts together and write $i^m$ for $m$ parts
equal to~$i$ in a partition.

\begin{lemma}\label{lem:up}
(1) If $\mu=(a_1,a_2,...,a_k)$ is a {\it up}-partition
with $a_k=2$,  then
$\mu'=(a_1,a_2,...,a_{k-1}, 1^2)$ is also a {\it up}-partition.

\smallskip

(2) If $\mu=(a_1,a_2,...,a_k)$ is a {\it up}-partition
with $k \ge 2$,  then
$\mu_2=(a_2,...,a_k)$ is also a {\it up}-partition.
\end{lemma}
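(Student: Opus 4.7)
My plan is to prove both parts by contrapositive, taking a witness that the ``smaller'' partition fails to be a up-partition and lifting it to a witness for $\mu$.

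For (2), I would assume that $\mu_2$ is not a up-partition, so that some $\nu \vdash n - a_1$ admits at least two distinct $\mu_2$-paths. I would then construct $\lambda \vdash n$ by appending $a_1$ parts equal to $1$ at the bottom of $\nu$, namely $\lambda = (\nu_1, \ldots, \nu_m, 1^{a_1})$. The skew shape $\lambda \setminus \nu$ consists of the $a_1$ cells stacked in the first column below $\nu$, and these form a single rim hook of size $a_1$ (arm length $0$, leg length $a_1-1$). Since any $a_1$-hook removal producing $\nu$ must remove precisely the cells of $\lambda \setminus \nu$, this is the \emph{unique} such removal. Prepending it to each of the two $\mu_2$-paths in $\nu$ would then give two distinct $\mu$-paths in $\lambda$, contradicting that $\mu$ is a up-partition.

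For (1), my key observation would be that a $\mu$-path and a $\mu'$-path in the same $\lambda$ agree in their first $k-1$ steps, which successively remove $a_i$-hooks for $i = 1,\ldots,k-1$. After these steps the current partition has $n - (a_1 + \cdots + a_{k-1}) = a_k = 2$ cells, so it must equal $(2)$ or $(1,1)$. From either two-cell shape there is exactly one $2$-hook whose removal produces $(0)$, and exactly one sequence of two successive $1$-hook removals producing $(0)$ (since each two-cell shape has a single removable corner). The first $k-1$ steps therefore determine the remainder of the path uniquely in both cases, giving a bijection between $\mu$-paths and $\mu'$-paths in each $\lambda \vdash n$, from which the statement follows.

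The main point requiring care will be the choice of $\lambda$ in part (2): I need a construction in which the first $a_1$-hook removal is forced to pass through the chosen $\nu$. Appending a vertical strip of $1$'s below $\nu$ does the job because the added cells lie entirely in the first column, so the rim hook to be removed is uniquely determined by the requirement that the result equals $\nu$. Part (1) then reduces to the essentially trivial uniqueness of the last one or two steps.
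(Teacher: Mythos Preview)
Your argument is correct and follows the same contrapositive idea as the paper. For (1) the paper simply says it is immediate from the definition; your bijection unpacks exactly why, namely that the last step (removing a $2$-hook vs.\ two $1$-hooks) is uniquely determined from any partition of size~$2$. For (2) the paper just adds \emph{any} $a_1$-hook to the witness $\nu$ and observes that the two $\mu_2$-paths in $\nu$ extend to two $\mu$-paths in the result; your vertical-strip construction is a concrete instance of this. One small over-complication: you do not actually need the first $a_1$-removal to be ``forced'' to pass through $\nu$. It suffices that \emph{some} $a_1$-hook of $\lambda$ yields $\nu$ upon removal; prepending that single step to two distinct $\mu_2$-paths already gives two distinct $\mu$-paths in $\lambda$, regardless of whether $\lambda$ has other $a_1$-hooks.
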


\proof (1) follows immediately from the definition.

(2) If a partition $\la_2$ of $n-a_1$ has two or more
$\mu_2$-paths then any partition of $n$ obtained 
by adding an $a_1$-hook
to $\la_2$ has two or more $\mu$-paths.
\qed

\medskip

\begin{lemma}\label{upext}
Let $\mu=(a_1,a_2,...,a_k)$ be a
  partition of $n$ and $a>n$.
Then $\mu$ is a {\it up}-partition if and only if
$\mu'=(a,a_1,...,a_k)$ is a {\it up}-partition.
\end{lemma}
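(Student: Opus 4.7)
My plan is to derive both implications from a single rigidity observation: when $a > n$, any partition $\nu$ of $n+a$ has at most one removable $a$-hook. Indeed, $|\nu| = n + a < 2a$ forces the $a$-weight of $\nu$ to satisfy $w_a(\nu) \leq 1$ (since $a \cdot w_a(\nu) = |\nu| - |\mathrm{core}_a(\nu)|$), and a partition of $a$-weight at most $1$ has at most one $a$-hook; this last point is transparent from the $a$-runner abacus display, where the currently removable $a$-hooks correspond to independently slidable beads, so sliding any one of them cannot block the others.

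With this observation, the forward direction is essentially automatic: for $\nu \vdash n+a$, either $\nu$ has no $a$-hook (hence no $\mu'$-path), or stripping off the unique $a$-hook produces some $\la \vdash n$, after which a $\mu'$-path in $\nu$ continues as a $\mu$-path in $\la$; the hypothesis that $\mu$ is a {\it up}-partition allows at most one such continuation, so $\nu$ has at most one $\mu'$-path.

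For the converse, I manufacture a useful $\nu$ from an arbitrary $\la \vdash n$. Since $a > n \geq \la_1$, the sequence $\nu := (a, \la_1, \la_2, \ldots)$ is a genuine partition of $n+a$, whose first row is visibly an $a$-hook and, by the rigidity observation, the only one. Any $\mu'$-path in $\nu$ must begin by removing this first row, producing $\la$, and the remaining path is simply a $\mu$-path in $\la$; the hypothesis that $\mu'$ is a {\it up}-partition therefore forces at most one $\mu$-path in $\la$, completing the argument. The only step with real content is the rigidity claim about $a$-hooks, which I expect to be the main obstacle---though only mildly so, as it is a standard consequence of the abacus machinery familiar from modular representation theory of symmetric groups.
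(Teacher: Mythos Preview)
Your forward direction and the rigidity observation are correct and essentially match the paper (which simply asserts that a partition of $a+n<2a$ cannot have two $a$-hooks; your abacus justification is a welcome addition).

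The converse, however, has a real error in the choice of $\nu$. In $\nu=(a,\la_1,\la_2,\ldots)$ the first row is \emph{not} a removable rim $a$-hook when $\la\neq(0)$: a rim hook removal produces a partition $\rho\subset\nu$ drawn with the same top-left corner, and deleting all of row~1 does not do this. Worse, $\nu$ may have no $a$-hook at all. Already for $\la=(1)$ and any $a\ge 2$ the partition $\nu=(a,1)$ has hook lengths $a+1,\,a-1,\,a-2,\ldots,1,\,1$, none equal to~$a$; similarly $\la=(2,2)$, $a=5$ gives $\nu=(5,2,2)$, whose hook lengths $7,6,3,2,1,3,2,2,1$ omit~$5$. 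In such cases $\nu$ vacuously has at most one $\mu'$-path, which tells you nothing about $\mu$-paths in~$\la$.

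The fix is immediate and restores your argument: take $\nu=(\la_1+a,\la_2,\la_3,\ldots)$ instead, extending the first row of $\la$ by $a$ cells. Then $\nu/\la$ is a horizontal strip of length~$a$, a genuine rim $a$-hook whose removal returns~$\la$, and (by your rigidity observation) the only $a$-hook in~$\nu$; the rest of your reasoning goes through verbatim. This is exactly how the paper handles the converse, packaged as Lemma~\ref{lem:up}(2): if $\la\vdash n$ has at least two $\mu$-paths, then any partition obtained by adding an $a$-hook to $\la$ inherits at least two $\mu'$-paths.
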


\proof
By Lemma \ref{lem:up}(2)  we only need to show
that if $\mu$ is a {\it up}-partition then also $\mu'$ is
a {\it up}-partition. Let $\la'$ be a partition of $a+n$. Since $a>n$,
$\la'$ cannot contain two or more $a$-hooks. If $\la'$ contains an
$a$-hook,  we let $\la$ be the partition obtained by removing
it. Since by assumption $\mu$ is a {\it up}-partition for  $\la$,  we get
that $\mu'$ is a {\it up}-partition for~$\la'$.
\qed

\medskip

We call an extension of a partition of $n$ by a part $a>n$ as in Lemma~\ref{upext}
{\it strongly decreasing}, or for short, an {\it sd-extension}.
A partition $\mu$ obtained from a partition $\rho$ by several
{\it sd}-extensions is then called an {\it sd-extension of $\rho$}; if $\rho=(0)$, $\mu$ is
called an  {\it sd-partition}.
As stated in \cite{OlSC}, a partition  $\mu=(a_1,a_2,...,a_k)$ is an {\it sd}-partition if  and only if
$a_i>a_{i+1}+...+a_k$ for all $i=1,...,k-1$.

\smallskip

We have the following classification result for {\it up}-partitions:

\begin{theorem}\label{upclass}
A partition  $\mu$ is a {\it up}-partition if and only if one of the following
holds:

{\rm (i)} $\mu$ is an {\it sd}-partition.

{\rm (ii)}  $\mu$ is an {\it sd}-extension of $(1^2)$.
\end{theorem}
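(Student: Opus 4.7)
My plan is to prove both directions by induction on the number of parts $k = \ell(\mu)$.

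For sufficiency, the base cases are the empty partition (trivially a \emph{up}-partition) and $(1^2)$ (checked by direct inspection: each of the two partitions of $2$ admits exactly one $(1,1)$-path). For a general \emph{sd}-partition $\mu = (a_1, \mu_2)$, the defining inequality $a_1 > |\mu_2|$ is exactly the hypothesis of Lemma~\ref{upext}, so \emph{up}-ness of $\mu_2$ (by IH) transfers to $\mu$. The same iteration starting from $(1^2)$ handles \emph{sd}-extensions of $(1^2)$.

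For necessity, I again induct on $k$; the base case $k = 1$ is trivial since $(a_1)$ is an \emph{sd}-partition. For $k \ge 2$, by Lemma~\ref{lem:up}(2), $\mu_2$ is a \emph{up}-partition, hence by IH is \emph{sd} or an \emph{sd}-extension of $(1^2)$. The inductive step then reduces to the key claim: \emph{if $\mu$ is a up-partition with $k \ge 2$ parts and $\mu \ne (1, 1)$, then $a_1 > a_2 + \cdots + a_k$}. Granted this, $\mu$ is an \emph{sd}-extension of $\mu_2$ by Lemma~\ref{upext}, so $\mu$ inherits type (i) or (ii) from $\mu_2$; the remaining case $\mu = (1, 1)$ itself lies in case (ii) with zero extensions.

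The key claim is proved by contraposition: assuming $a_1 \le |\mu_2|$ and $\mu \ne (1, 1)$, I exhibit a partition $\la$ of $|\mu|$ with at least two $\mu$-paths. Small exceptional cases, namely $(1,1,1)$ and $(2,1,1)$, are handled by direct inspection of $\la = (2, 1)$ and $\la = (2, 2)$ respectively. In the generic case $\mu_2 \ne (1, 1)$, the inductive characterization of $\mu_2$ gives $a_2 > a_3 + \cdots + a_k$, which combined with $a_1 \le |\mu_2|$ pins $\mu$ into the narrow window $a_2 \le a_1 < 2 a_2$. The main obstacle here is that no single family of $\la$'s works uniformly: $\la = \mu$ has two $a_1$-hooks (and two $\mu$-paths) when $a_1 = |\mu_2|$, as in $\mu = (3,2,1)$, but only one $a_1$-hook when $a_1 < |\mu_2|$, as in $\mu = (4,3,2)$. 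One therefore does case analysis on the relation between $a_1$ and $|\mu_2|$, with candidates such as $\la = \mu$ itself, the two-row shape $\la = (|\mu_2|, a_1)$, and near-rectangular shapes like $(\lceil n/2 \rceil, \lfloor n/2 \rfloor)$ each working in different regimes; for each chosen $\la$ the hook-length formula locates two distinct $a_1$-hook removals, and one verifies using the \emph{sd}-structure of $\mu_2$ (and IH-uniqueness of $\mu_2$-paths) that both removals yield partitions admitting a $\mu_2$-path, producing the required two $\mu$-paths.
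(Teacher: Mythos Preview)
Your overall architecture matches the paper's: sufficiency via Lemma~\ref{upext} is identical, and for necessity both you and the paper reduce (via Lemma~\ref{lem:up}(2) and induction/minimality) to the situation where $\mu_2$ is already known to be of type (i) or (ii) and one must rule out $a_1 \le |\mu_2|$ by exhibiting some $\lambda$ with two $\mu$-paths.

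The gap is precisely at that last step, which is the heart of the argument. You list several candidate families for $\lambda$ and assert that a case analysis among them will work, but you do not actually specify which $\lambda$ is used in which regime, nor verify for any of them that (a) two $a_1$-hooks exist and (b) \emph{both} resulting partitions admit a $\mu_2$-path. Point (b) is not automatic and is where the $sd$-structure of $\mu_2$ must be invoked concretely. As written this is a description of what a proof would have to do, not a proof.

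The paper avoids your anticipated case split with a single explicit construction. After disposing of $a_2=1$ and of $a_1=a_2$ (the latter via $\lambda=(a_1,a_1)$ when $k=2$ and $\lambda=(n-a_1,1^{a_1})$ when $k>2$), it assumes $a_1>a_2>1$ and $a_1\le n_2=|\mu_2|$, sets $d=a_1-n_3-1>0$, and takes
\[
\lambda=(n_2,\,n_3+1,\,1^{d}),
\]
whose set of first-column hook lengths is $\{a_1+a_2,\,a_1,\,d,\,d-1,\dots,1\}$. Since $d\le a_2-1$, there are two $a_1$-hooks; removing the one in the second row yields $(n_2)$, while removing the one in the first row yields a partition with first-column hook lengths $\{a_1,a_2,d,\dots,1\}$, from which removing the $a_2$-hook in its second row gives $(n_3)$. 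Both $(n_2)$ and $(n_3)$ visibly admit $\mu_2$- and $\mu_3$-paths respectively along the single row, so $\lambda$ has two $\mu$-paths. This one $\lambda$ handles the entire window $a_2 < a_1 \le n_2$ uniformly, which is cleaner than the patchwork you propose.
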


\proof
In the proof we use the well-known connection between first column hook lengths
and hook removal as described in \cite[Section 2.7]{JK}.

As $(0)$ and $(1^2)$ are {\it up}-partitions, Lemma~\ref{upext}
shows that their {\it sd}-extensions are {\it up}-partitions.
Suppose that $n$ is minimal such that there exists a partition
$\mu=(a_1,a_2,...,a_k)$ of $n$,  which is a {\it up}-partition but not an
{\it sd}-extension of $(0)$ or $(1^2)$.
Obviously $k \ge 2$.

Assume $a_2=1$, i.e., $\mu=(n-k+1,1^{k-1})$.
If $k>3$, then $\mu$ is not a {\it up}-partition since $(1^{k-1})$ is not.
For $k=3$, only $(2,1^2)$ and $(1^3)$ are not {\it sd}-extensions of $(1^2)$,
but these are not {\it up}-partitions.
For $k=2$, $\mu$ is an {\it sd}-partition or $(1^2)$.

Thus we may now assume that $a_2>1$.
We put $\mu_i=(a_i,...,a_k)$ and $n_i=|\mu_i|$ for $i=2,...,k$. Also $n_{k+1}:=0$.

Now suppose that $a_1=a_2$.
If $k=2$ then $\mu$ is not a {\it up}-partition for $\la=(a_1,a_1)$.
If $k>2$ then $\mu$ is not a {\it up}-partition for $\la=(n-a_1,1^{a_1})$.

Thus we may now assume $a_1>a_2>1$.
By Lemma~\ref{lem:up},  $\mu_2=(a_2,...,a_k)$ is a {\it up}-partition,
and thus, by minimality, it is an {\it sd}-extension of $(0)$ or $(1^2)$.
Then $\mu$ cannot be an {\it sd}-extension of $\mu_2$, and hence
$a_1\le n_2$.

Now  $a_1>a_2> n_3$ and hence  $d:=a_1-n_3-1>0$.
Note that $n_2=n_3+a_2>n_3+1$, and thus $\la=(n_2,n_3+1,1^d)$ is a partition of
$n_2+n_3+1+d=a_1+n_2=n$.
The set of first column hook lengths for $\la$ is $\{a_1+a_2,a_1,d,d-1,...,1\}, $
as is easily calculated.
As $d\le n_2-n_3-1=a_2-1$,  $\la$ has two $a_1$-hooks. After
removing the $a_1$-hook in the second row we get the partition
$\la'=(n_2)$.
After removing the $a_1$-hook in the first row we get $\{a_1,a_2, d,d-1,...,1\} $
as a set of a first column hook lengths
for a partition $\la''$.
Now $\la''$ has an $a_2$-hook in
the second row. Removing it we obtain the partition $(n_3)$.
This shows that $\mu$ is not a {\it up}-partition for $\la$, giving a contradiction.
\qed

\smallskip

\section{On {\it up}-partitions and restricted binary partitions}

For each $n\in \N$, we denote the number of {\it up}-partitions of $n$ by $u(n)$.
For $t\in \mathbb{N}$, we define an {\it $sd_t$-partition} to be an $sd$-extension of the partition~$(t)$.
The following lemma is obvious.

\begin{lemma}\label{sdt} Let $\mu$ be a partition of $t$. There is a bijection
  between $sd$-extensions of $\mu$ and $sd_t$-partitions obtained by replacing
  all the parts of $\mu$ by one part~$t$.
\qed
\end{lemma}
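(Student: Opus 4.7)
The plan is to exhibit the bijection explicitly and then observe that the defining inequalities for an $sd$-extension depend only on the total weight of the tail, not on the internal structure of the tail. This makes the correspondence essentially tautological, matching the paper's description of the lemma as obvious.

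First I would unfold the definitions. An $sd$-extension of $\mu=(a_1,\ldots,a_k)$ is, by iterating Lemma~\ref{upext}, a partition of the form
\[
\nu=(b_1,b_2,\ldots,b_m,a_1,a_2,\ldots,a_k),
\]
where each prepended part exceeds the sum of everything to its right: for $1\le i\le m$,
\[
b_i > b_{i+1}+\cdots+b_m + (a_1+\cdots+a_k) = b_{i+1}+\cdots+b_m + t.
\]
Similarly, an $sd_t$-partition is by definition an $sd$-extension of $(t)$, so it has the form
\[
\tilde\nu=(b_1,b_2,\ldots,b_m,t),
\]
subject to the conditions $b_i > b_{i+1}+\cdots+b_m+t$ for $1\le i\le m$.

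Next I would define the map $\nu\mapsto\tilde\nu$ by simply replacing the final block $a_1,\ldots,a_k$ (summing to $t$) by the single part $t$, keeping the prefix $b_1,\ldots,b_m$ untouched, with inverse the replacement of the final part $t$ by $a_1,\ldots,a_k$ in the reverse direction. The key observation is that the strong-decrease inequalities on the $b_i$ in the two settings are literally the same inequality, because the sum of the removed tail equals $|\mu|=t$. Hence valid $sd$-extensions of $\mu$ correspond one-to-one with valid $sd_t$-partitions, and the two maps are mutually inverse.

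There is essentially no obstacle: the only thing to check is that the $sd$-extension condition is invariant under replacing the tail by any partition with the same sum, which is immediate from the form of the inequalities. I would therefore simply record the bijection and the matching of conditions, as the author does with \qed.
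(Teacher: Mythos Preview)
Your proof is correct and is precisely the natural explicit verification of the bijection already named in the statement; the paper itself offers no argument beyond declaring the lemma obvious and marking it \qed. Your observation that the $sd$-extension inequalities depend only on the total weight $t$ of the tail is exactly the point, and nothing further is needed.
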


We denote {\it the number of sd-partitions} of $n$ by $s(n)$ and {\it
  the number of $sd_t$-partitions} of $n$ by $s_t(n)$ so that
$s(n)=\sum_{t \ge 1}s_t(n)$.
Combining  Theorem~\ref{upclass} with Lemma~\ref{sdt} we get the following:

\begin{cor}\label{cor:u-as-s}
For each $n\geq 1$,
$$ u(n)=s(n)+s_2(n)\:.  \qed $$
\end{cor}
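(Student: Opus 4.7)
The plan is to read off the corollary directly from Theorem~\ref{upclass}, using Lemma~\ref{sdt} only to translate case~(ii) into the enumeration $s_2(n)$. Specifically, Theorem~\ref{upclass} partitions the up-partitions of $n$ into two classes: class (i) consists of the sd-partitions of $n$, which are enumerated by $s(n)$ by definition; class (ii) consists of the sd-extensions of $(1^2)$ summing to $n$. For class (ii), I would invoke Lemma~\ref{sdt} with $t=2$ and $\mu=(1^2)$: this gives a bijection between sd-extensions of $(1^2)$ and $sd_2$-partitions, and the bijection preserves the total weight (both partitions have the same sum, since $(1^2)$ and $(2)$ both have size $2$). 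So the number of sd-extensions of $(1^2)$ that are partitions of $n$ equals $s_2(n)$.

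The one subtlety is that the two classes in Theorem~\ref{upclass} must be shown to be disjoint so that the counts simply add. I would argue that class (i) and class (ii) are disjoint by examining the tail of the partition: an sd-partition $(a_1,\ldots,a_k)$ satisfies $a_i > a_{i+1}+\cdots+a_k$ for $i<k$, so in particular $a_{k-1}>a_k$; hence an sd-partition can have at most one part equal to $1$. By contrast, any sd-extension of $(1^2)$ ends in exactly two parts equal to $1$, so the two classes cannot overlap.

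Putting these two observations together yields $u(n)=s(n)+s_2(n)$, which is the claim. There is no serious obstacle here: the content is entirely packaged in the previous results, and the only thing one has to be careful about is the disjointness of the two cases in the classification theorem, which is immediate from the definition of sd-partition.
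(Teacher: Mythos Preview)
Your argument is correct and follows exactly the route the paper intends: the corollary is stated immediately after the sentence ``Combining Theorem~\ref{upclass} with Lemma~\ref{sdt} we get the following,'' and your proposal is precisely that combination spelled out. Your explicit check that cases~(i) and~(ii) are disjoint (via the observation that an sd-partition cannot have two equal smallest parts) is a detail the paper leaves implicit, but it is the right thing to verify.
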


Next, we focus our attention on $s(n)$.

\begin{prop}\label{prop:s-decomp}
For each $n \ge 2$,
$$s(n)=2s_1(n)+s_2(n)\:.$$
\end{prop}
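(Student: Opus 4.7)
My plan is to recast the identity as a bijection problem. Since $s(n)=\sum_{t\ge 1}s_t(n)$ by definition, the claim $s(n)=2s_1(n)+s_2(n)$ is equivalent to
\[
\sum_{t\ge 3}s_t(n)=s_1(n),
\]
so it suffices to build a size-preserving bijection between $sd$-partitions of $n$ whose smallest part is at least $3$ and $sd$-partitions of $n$ whose smallest part equals $1$.

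The bijection I will exhibit is essentially forced: send $(a_1,\ldots,a_k)$ with $a_k\ge 3$ to $(a_1,\ldots,a_{k-1},a_k-1,1)$, with inverse $(b_1,\ldots,b_{m-1},1)\mapsto (b_1,\ldots,b_{m-2},b_{m-1}+1)$. Total size is clearly preserved, so all the real work is checking that the inequalities $a_i>a_{i+1}+\cdots+a_k$ characterizing $sd$-partitions (recalled right before Theorem~\ref{upclass}) transfer. The key observation is $(a_k-1)+1=a_k$: the tail sum from every position $j\le k-1$ is unchanged, so every old inequality persists, and the one genuinely new condition $a_k-1>1$ is exactly the hypothesis $a_k\ge 3$. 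In the reverse direction the same computation applies, and $b_{m-1}+1\ge 3$ is automatic since $b_{m-1}>b_m=1$ forces $b_{m-1}\ge 2$.

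I do not anticipate any real obstacle. The only subtlety is that the inverse needs $m\ge 2$, which is precisely where the hypothesis $n\ge 2$ enters: it rules out the lone $sd$-partition $(1)$, which would otherwise be a partner-less element on the right (note that the proposition does indeed fail at $n=1$). As a quick sanity check I would verify the bijection on, say, $n=7$, where the pairs $(7)\leftrightarrow(6,1)$ and $(4,3)\leftrightarrow(4,2,1)$ exhaust both sides and the count $s(7)=5=2\cdot 2+1=2s_1(7)+s_2(7)$ works out.
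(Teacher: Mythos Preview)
Your proof is correct and uses exactly the same bijection as the paper, namely $(a_1,\ldots,a_k)\mapsto(a_1,\ldots,a_{k-1},a_k-1,1)$ for $a_k\ge 3$; you are simply more explicit than the paper in verifying that the $sd$-inequalities transfer, in writing down the inverse, and in pinpointing where the hypothesis $n\ge 2$ is used.
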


\proof
Let $\la=(a_1,a_2,...,a_k)$ be an $sd_t$-partition, i.e., $a_k=t$.
If we  map $\la$ onto $(a_1,a_2,...,a_k-1,1)$ we get a bijection
between the set of all $sd_t$-partitions of $n$ with $t \ge 3$ and the set of all
$sd_1$-partitions of $n$.  Thus $s_1(n)=\sum_{t \ge 3}s_t(n)$.  The
result follows, since $s(n)=\sum_{t \ge 1}s_t(n)$.
\qed

\medskip

Combining Corollary \ref{cor:u-as-s} and Proposition \ref{prop:s-decomp}, we have
the following:
\begin{theorem}\label{u-pari}
For each $n \ge 2$, $u(n)$ is even. In fact,
$$\frac{u(n)}{2}=s_1(n)+s_2(n)\:. \qed $$
\end{theorem}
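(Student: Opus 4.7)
The plan is to prove this theorem by direct algebraic substitution, combining the two results immediately preceding the statement; there is no new combinatorial idea needed. First I would recall Corollary~\ref{cor:u-as-s}, which gives $u(n) = s(n) + s_2(n)$ for all $n \ge 1$, and Proposition~\ref{prop:s-decomp}, which gives $s(n) = 2s_1(n) + s_2(n)$ for $n \ge 2$.

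Substituting the expression for $s(n)$ from the proposition into the formula from the corollary yields
\[
u(n) = \bigl(2s_1(n) + s_2(n)\bigr) + s_2(n) = 2s_1(n) + 2s_2(n) = 2\bigl(s_1(n) + s_2(n)\bigr)
\]
for each $n \ge 2$. This expression is manifestly an even integer (since $s_1(n)$ and $s_2(n)$ are nonnegative integer counts of certain partitions), so $u(n)$ is even. Dividing by $2$ gives $u(n)/2 = s_1(n) + s_2(n)$, which is the claimed identity.

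There is essentially no obstacle here: the theorem is a one-line corollary of the two preceding results, and both the hypothesis $n \ge 2$ on the evenness and the range of validity for the identity come from the corresponding hypothesis in Proposition~\ref{prop:s-decomp}. The only minor thing worth noting in the write-up is that $s_1(n) + s_2(n)$ is automatically a nonnegative integer, so the division by $2$ is meaningful and both assertions of the theorem follow simultaneously.
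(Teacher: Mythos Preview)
Your proposal is correct and is exactly the paper's own argument: the theorem is stated immediately after Corollary~\ref{cor:u-as-s} and Proposition~\ref{prop:s-decomp} with only the sentence ``Combining Corollary~\ref{cor:u-as-s} and Proposition~\ref{prop:s-decomp}, we have the following'' and a \qed, so the substitution you wrote out is precisely what is intended.
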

Thanks to their definition, it is clear that $sd$-partitions are closely related to
non-squashing partitions and binary partitions as described in~\cite{SlSe}.
A partition
$\la=(a_1,a_2,...,a_k)$ is called
{\it non-squashing} if
$a_i \ge a_{i+1}+...+a_k$ for $1 \le i \le k-1$
and {\it binary} if all parts $a_i$ are powers of $2$.
The difference between $sd$- and non-squashing partitions is whether or not the
inequalities between $a_i$ and $a_{i+1}+...+a_k$ are strict.
A binary partition is called {\it restricted} (for short,
an {\it $rb$-partition}) if it satisfies
the following condition: Whenever $2^i$ is a part and $i \ge 1$ then
$2^{i-1}$ is also a part.
For $t \in \N,$ an $rb_t$-{\it partition} is an $rb$-partition
where the largest part occurs with multiplicity~$t$.

With this in mind, we can naturally connect the $sd_t$-partitions and the $rb_t$-partitions.

\begin{theorem}\label{sdt-bij}
Let $n,t \in \N$.  There is a bijection between the set of
$sd_t$-partitions of $n$ and the set of $rb_t$-partitions of~$n$.
\end{theorem}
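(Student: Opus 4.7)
The plan is to exhibit an explicit recursive bijection $\phi$ from $sd_t$-partitions of $n$ to $rb_t$-partitions of $n$. The guiding observation is that the $sd$-extension operation used to build $sd_t$-partitions from $(t)$ should correspond, on the $rb_t$-side, to multiplying every part by~$2$ and then appending some copies of~$1$; this trades one additional $sd$-extension on one side for one additional distinct power of~$2$ on the other side.

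I would set $\phi((t))=(1^t)$ as the base case: this is an $rb_t$-partition of $t$ with largest part~$1$ of multiplicity~$t$. For the recursive step, given $\mu=(a_1,\ldots,a_k)$ with $k\ge 2$, the tail $\mu'=(a_2,\ldots,a_k)$ is an $sd_t$-partition of $n'=n-a_1$, and by definition of $sd$-extension $a_1>n'$, so $d:=a_1-n'\ge 1$. I then define $\phi(\mu)$ to be the partition obtained by multiplying every part of $\nu':=\phi(\mu')$ by~$2$ and appending $d$ copies of~$1$. The total is $2n'+d=n'+a_1=n$.

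Next I would verify that $\phi(\mu)$ is an $rb_t$-partition. Inductively $\nu'$ has multiplicities $b'_0,\ldots,b'_{k-2}$ of $2^0,\ldots,2^{k-2}$ with $b'_{k-2}=t$ and $b'_j\ge 1$ for $0\le j\le k-3$. Multiplying every part by~$2$ shifts these to multiplicities $0,b'_0,\ldots,b'_{k-2}$ of $2^0,\ldots,2^{k-1}$, and appending $d\ge 1$ ones makes the multiplicity of~$1$ equal to~$d$. So the largest part of $\phi(\mu)$ is $2^{k-1}$ with multiplicity~$t$ and every smaller power of~$2$ has positive multiplicity, giving an $rb_t$-partition.

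The inverse map runs the recursion backwards. Given an $rb_t$-partition $\nu$ of $n$ with largest part $2^m$: if $m=0$ then $\nu=(1^t)$ and I return $(t)$; otherwise let $d$ be the (positive) multiplicity of~$1$ in $\nu$, remove those $d$ ones, and divide every remaining part by~$2$. The result is an $rb_t$-partition $\nu'$ of $(n-d)/2$, well-defined since $n-d$ is even by the binary structure. Recursively, $\nu'$ corresponds to an $sd_t$-partition $\mu'$ of $(n-d)/2$, and I prepend $a_1:=(n-d)/2+d$; since $a_1>|\mu'|$, the resulting partition is an $sd_t$-partition of~$n$. The main obstacle will be the bookkeeping needed to verify at each recursion level that the two operations are inverse to each other and that all multiplicity and positivity constraints are maintained; everything else follows by straightforward induction on $k$ (equivalently, on $m$).
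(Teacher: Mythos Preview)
Your proposal is correct and describes the same bijection as the paper, just unwound recursively: the paper encodes $\lambda=(a_1,\ldots,a_k)$ directly via the positive differences $d_i=a_i-(a_{i+1}+\cdots+a_k)$ (with $d_k=a_k=t$), notes that $n=\sum_i d_i\,2^{i-1}$, and sends $\lambda$ to the binary partition having $d_{j+1}$ copies of~$2^j$. Your recursive ``double all parts and append $d$ ones'' step produces exactly this map, since your $d$ at each level is the paper's $d_1$; so the two constructions coincide, with the paper's closed formula replacing your induction.
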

\proof
Clearly, an $sd$-partition $\la=(a_1,a_2,...,a_k)$ of $n$ is uniquely
determined  by the positive integers $d_i \in \N$, $i=1,...,k$, defined
by $d_i=a_i-(a_{i+1}+...+a_k)$ for $i=1,...,k-1$, and  $d_k=a_k$.
An easy calculation shows that with this notation
$n=d_1+d_22+...+d_k2^{k-1}$.  Thus if we map $\la$ onto the
binary partition where $2^j$ occurs with multiplicity $d_{j+1}$,  $j=0,1,...,k-1$,
we get the desired bijection.
\qed

\begin{rem}\label{Rem3.6} 
{\rm
Theorem \ref{sdt-bij} shows that $s(n)$ equals
the number of $rb$-partitions of~$n$.   Let
$S(q):=\sum_{n\ge 1}s(n)q^n$ be the generating function for $s(n)$.
It is easy to write down the
generating function for the number of $rb$-partitions which implies that
$$S(q)=\sum_{i \ge 1} q^{2^i-1}\prod_{j=0}^{i-1}\frac{1}{1-q^{2^j}}\:.$$ 
From its definition, one also gets the identity 
$$S(q)(1-q)=q(1+S(q^2))\:.$$ 
Moreover, the generating function $S_t(q)$ for the number of $rb_t$-partitions is given by
$$S_t(q)=\sum_{i \ge 1}
q^{2^i-1+(t-1)2^{i-1}}\prod_{j=0}^{i-2}\frac{1}{1-q^{2^j}},$$ 
and it satisfies the identity 
$$(S_t(q)-q^t)(1-q)=qS_t(q^2)\:.$$
Hence, by Theorem \ref{u-pari}, 
the generating function $U(q)$ for the number of {\it up}-partitions
is then
$$U(q)=2(S_1(q)+S_2(q)).$$
}
\end{rem}

We now exploit this connection between $rb$-partitions and
$sd$-partitions to prove a number
of facts about $s(n)$ and related functions. 
The following results may alternatively also be proved by using 
the identities for the generating functions $S(q)$ and $S_t(q)$ 
stated above. 

\begin{prop}\label{s-pari}
For each $r\in \N$ we have
\begin{center}
$
\begin{array}{rcl}
s(2r)&=&s(2r-1)  \\
s(2r+1)&=&s(2r)+s(r) \;.
\end{array}
$
\end{center}
\end{prop}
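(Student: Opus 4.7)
The plan is to prove both recurrences combinatorially by appealing to Theorem~\ref{sdt-bij} and Remark~\ref{Rem3.6}, which identify $s(n)$ with the number of $rb$-partitions of~$n$. Recall that an $rb$-partition is a partition into powers of~$2$ in which $2^i$ being a part (for $i\ge 1$) forces $2^{i-1}$ to be a part; iterating downwards, every non-empty $rb$-partition must contain~$1$, and if $d_1$ denotes the multiplicity of~$1$ then $d_1$ has the same parity as the partitioned number.

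First I would establish $s(2r)=s(2r-1)$: an $rb$-partition of $2r$ has $d_1$ even and positive, hence $d_1\ge 2$, so removing one copy of~$1$ produces an $rb$-partition of $2r-1$ (the remaining $d_1-1\ge 1$ ones keep the chain condition alive), and adding a~$1$ to any $rb$-partition of $2r-1$ inverts this map.

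For $s(2r+1)=s(2r)+s(r)$, I would split the $rb$-partitions of $2r+1$ according to whether $d_1\ge 3$ or $d_1=1$ (the only possibilities, since $d_1$ is odd). When $d_1\ge 3$, deleting one~$1$ yields an $rb$-partition of $2r$; this map bijects onto all of them because every $rb$-partition of $2r$ has $d_1\ge 2$. When $d_1=1$, deleting the unique~$1$ leaves only parts that are powers of~$2$ with exponent at least~$1$, and halving every remaining part produces an $rb$-partition of~$r$; the inverse doubles all parts of a given $rb$-partition of~$r$ and then prepends a single~$1$.

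The main step requiring careful verification is the halving bijection in the $d_1=1$ subcase. Letting $c_i$ denote the multiplicity of $2^i$ in such an $rb$-partition of $2r+1$, the original $rb$-condition amounts to: $c_1\ge 1$ whenever some $c_j$ with $j\ge 1$ is positive, together with $c_{i-1}\ge 1$ whenever $c_i\ge 1$ for $i\ge 2$. These are exactly the conditions needed for the halved partition, whose multiplicity of $2^j$ equals $c_{j+1}$, to be a valid $rb$-partition, and a quick sum computation confirms that it has weight $r$. Once this bookkeeping is in place the two displayed identities follow at once.
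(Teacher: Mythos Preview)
Your proof is correct and follows essentially the same route as the paper's: both interpret $s(n)$ as the number of $rb$-partitions via Theorem~\ref{sdt-bij} and set up the identical remove-a-$1$ and halve-the-parts bijections. Your version is slightly more explicit about the parity of the multiplicity $d_1$ (the paper instead remarks that a binary partition of an odd number must contain a~$1$), but the underlying argument is the same.
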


\proof
An $rb$-partition must contain a part~1. Removing such a part from an
$rb$-partition $\la$ of $2r$
gives an $rb$-partition $\la'$ of $2r-1$. (A binary partition of an odd
number must contain 1 as a part, so that $\la'$ is still $rb$.) This map
is then in fact a bijection between $rb$-partitions of $2r$ and those of~$2r-1$.

Removing a part 1 from an $rb$-partition $\la$ of $2r+1$ gives a
binary partition $\la'$ of $2r$.  If $\la'$ has a part equal to~1, it is
an $rb$-partition and we put $\la''=\la'$.  Otherwise all parts of
$\la'$ are even and we may divide them all by~2 to get an
$rb$-partition $\la''$ of~$r$.  The process of going from $\la$ to $\la''$
may obviously be reversed. Thus $s(2r+1)=s(2r) +s(r).$
\qed

\smallskip

With Proposition \ref{s-pari} in mind, we define $s^*(r):=s(2r) (=s(2r-1)) $
for $r \in \N$.

\begin{prop}\label{s-recur}
We have $s^*(1)=1$ and
$$s^*(r)=s^*(r-1)+s^*(\left\lfloor \frac{r}{2} \right\rfloor)\;,
\text{ for $r \ge 2$.}$$
\end{prop}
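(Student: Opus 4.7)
The plan is to reduce the recurrence directly to two applications of Proposition~\ref{s-pari} after splitting on the parity of~$r$. The base case $s^*(1)=1$ is immediate since $s^*(1)=s(2)=s(1)$ and there is exactly one $rb$-partition of $1$, namely $(1)$.

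For the recursion, I would unfold both sides in terms of the original function $s$. Write $s^*(r)=s(2r)$ and $s^*(r-1)=s(2r-2)$. The target identity
$$s^*(r)=s^*(r-1)+s^*(\lfloor r/2\rfloor)$$
then becomes an identity of the form $s(2r)=s(2r-2)+s(2\lfloor r/2\rfloor)$, and the cases $r$ even versus $r$ odd differ only in which relation from Proposition~\ref{s-pari} is needed at which step.

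If $r=2m$ with $m\ge 1$, the identity to prove is $s(4m)=s(4m-2)+s(2m)$. By the first part of Proposition~\ref{s-pari}, $s(4m)=s(4m-1)$, and applying the second part with $r$ replaced by $2m-1$ gives $s(4m-1)=s(4m-2)+s(2m-1)$. One more application of the first part, $s(2m-1)=s(2m)$, finishes this case. If instead $r=2m+1$ with $m\ge 1$, the identity to prove is $s(4m+2)=s(4m)+s(2m)$, and here one simply writes $s(4m+2)=s(4m+1)$ by the first part of the proposition, and then $s(4m+1)=s(4m)+s(2m)$ by the second part with $r$ replaced by~$2m$.

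There is no real obstacle; the only bookkeeping concern is ensuring that the arguments passed to $s$ stay nonnegative and match $\lfloor r/2 \rfloor$ correctly in each parity case. Both parities yield a match after the straightforward rewrites above, so the recurrence follows. (Alternatively, one could derive this recurrence from the functional equation $S(q)(1-q)=q(1+S(q^2))$ noted in Remark~\ref{Rem3.6} by reading off coefficients of $q^{2r}$ and $q^{2r-1}$, but the combinatorial route via Proposition~\ref{s-pari} is shorter.)
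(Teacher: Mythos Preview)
Your proof is correct and is essentially the same argument as the paper's: you split on the parity of $r$, translate the recurrence into the two identities $s(4m)=s(4m-2)+s(2m)$ and $s(4m+2)=s(4m)+s(2m)$, and read these off from Proposition~\ref{s-pari}. The only difference is cosmetic---you spell out the intermediate applications of Proposition~\ref{s-pari} a bit more explicitly than the paper does.
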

\proof
Clearly $s^*(1)=s(1)=1$.
We prove the proposition by showing that for $r' \in \N$ we have
$$s^*(2r')=s^*(2r'-1)+s^*(r') \: \text{ and } \: s^*(2r'+1)=s^*(2r')+s^*(r')\:.$$
The equations are by definition of $s^*$ equivalent to
$$s(4r')=s(4r'-2)+s(2r') \: \text{ and }  \: s(4r'+2)=s(4r')+s(2r')\:.$$
But these are easily deduced from Proposition~\ref{s-pari}.
\qed

\medskip

\begin{rem}
{\rm
Proposition \ref{s-recur} proves that the sequence $s^*(n)$ is listed in
\cite{OEIS} as A033485 and thus that the sequence $s(n)$ is listed
as A040039.
In particular, the comment by John McKay which appears in A40039 in \cite{OEIS} is confirmed.
}
\end{rem}

We proceed to consider the numbers $s_t(r)$ of $rb_t$-partitions.
\begin{prop}\label{st-pari}
Let $t \in \N$.
We have $s_t(1)=s_t(2)=...=s_t(t-1)=0$, $s_t(t)=1$,
$s_t(t+1)=...=s_t(2t)=0$,  and $s_t(2t+1)=1$.
Also,
$s_t(2r)=s_t(2r-1)$ whenever $t \neq 2r, 2r-1$,
i.e., whenever  $r \ne \lfloor \frac{t+1}{2}\rfloor$.
\end{prop}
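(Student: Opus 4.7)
The plan is to analyze $rb_t$-partitions directly by their structure. An $rb_t$-partition with largest part $2^{k-1}$ is specified by multiplicities $m_0,m_1,\ldots,m_{k-2}\ge 1$ together with $m_{k-1}=t$, giving $n = m_0+2m_1+\cdots+2^{k-1}t$. For $k=1$ the only choice is $1^t$ with $n=t$; for $k\ge 2$ one has $n\ge 1+2+\cdots+2^{k-2}+2^{k-1}t = 2^{k-1}(t+1)-1\ge 2t+1$, with equality only when $k=2$ and $m_0=1$ (the partition $(2^t,1)$). This directly yields all the boundary values claimed: $s_t(n)=0$ for $1\le n\le t-1$ and for $t+1\le n\le 2t$, $s_t(t)=1$, and $s_t(2t+1)=1$.

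For the identity $s_t(2r)=s_t(2r-1)$ I would set up a bijection via \emph{remove a part~$1$}. The key observation is that reducing $n=m_0+2m_1+2^2m_2+\cdots$ modulo $2$ gives $n\equiv m_0\pmod 2$, so for $n=2r$ the multiplicity $m_0$ is even. When $k\ge 2$ this forces $m_0\ge 2$, and deleting one $1$ produces a valid $rb_t$-partition of $2r-1$ (the inner multiplicities stay $\ge 1$ and the top multiplicity is unchanged at $t$). The only $rb_t$-partition of $2r$ with $k=1$ is $1^t$, which occurs precisely when $t=2r$; on this partition the deletion fails, since the result $1^{t-1}$ has the wrong largest-part multiplicity.

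Conversely, adding a part~$1$ to any $rb_t$-partition of $2r-1$ produces an $rb_t$-partition of $2r$, except when the source is $1^t$ (occurring exactly when $t=2r-1$), since $1^{t+1}$ again fails the multiplicity condition. These two maps are inverses on their natural domains, so together they exhibit a bijection between the $rb_t$-partitions of $2r$ --- with $1^t$ excluded if $t=2r$ --- and those of $2r-1$ --- with $1^t$ excluded if $t=2r-1$. Whenever neither $t=2r$ nor $t=2r-1$ holds, i.e.\ whenever $r\ne\lfloor(t+1)/2\rfloor$, no partition is excluded on either side, and the identity $s_t(2r)=s_t(2r-1)$ follows.

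The argument is really just careful bookkeeping, so I do not expect any serious obstacle: the only way the ``remove/add a~$1$'' bijection can break is at the all-ones partition $1^t$, and tracking the parities of $t$ and $n$ pinpoints precisely the single excluded value $r=\lfloor(t+1)/2\rfloor$ in the statement.
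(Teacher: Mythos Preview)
Your proof is correct and follows essentially the same approach as the paper: the boundary values are obtained by the obvious size estimate (which the paper simply calls ``trivial''), and the identity $s_t(2r)=s_t(2r-1)$ is proved via the same ``remove/add a part~$1$'' bijection on $rb_t$-partitions, with the all-ones partition $(1^t)$ identified as the unique exceptional case on each side. Your write-up is more explicit about the parity argument $n\equiv m_0\pmod 2$ and the minimum-size computation $2^{k-1}(t+1)-1$, but the underlying ideas coincide with the paper's.
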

\proof
The statements about $s_t(j)$ for $j \leq 2t+1$ are trivial.
The final statement is proved in analogy with Proposition \ref{s-pari}.
Using the notation of that proof we have the following:
If we assume that $\la$ is $rb_t$ then also $\la'$ is $rb_t$ with the
exception of the case where $\la=(1^t)$.  Also, if $\la'$ is $rb_t$
then $\la$ is $rb_t$ with the exception of the case where $\la'=(1^t)$.
Thus we have
$s_t(2r)=s_t(2r-1)$ except when $t \in \{2r,2r-1\}$ .
\qed

\medskip

\begin{cor}\label{cor:u-evenodd}
We have $u(1)=1, u(2)=2$, and
for $r \geq 2,  u(2r)=u(2r-1)$.
\qed 
\end{cor}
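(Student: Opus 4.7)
The plan is to deduce the corollary from Theorem~\ref{u-pari} combined with Proposition~\ref{st-pari}, handling the small cases separately.

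First, I would dispose of $u(1)$ and $u(2)$ by direct inspection using the classification in Theorem~\ref{upclass}. The only partition of $1$ is $(1)$, which is an \emph{sd}-partition, so $u(1)=1$. The partitions of $2$ are $(2)$ and $(1^2)$; the first is an \emph{sd}-partition and the second is the base case of the \emph{sd}-extensions of $(1^2)$, so both are \emph{up}-partitions and $u(2)=2$. (Note that Theorem~\ref{u-pari} does not apply at $n=1$, so $u(1)$ must be handled by hand.)

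For the main assertion, fix $r\ge 2$. By Theorem~\ref{u-pari},
\begin{equation*}
\frac{u(2r)}{2}=s_1(2r)+s_2(2r),\qquad \frac{u(2r-1)}{2}=s_1(2r-1)+s_2(2r-1).
\end{equation*}
So it suffices to show $s_t(2r)=s_t(2r-1)$ for $t=1$ and $t=2$. Proposition~\ref{st-pari} guarantees this equality whenever $r\ne\lfloor(t+1)/2\rfloor$. For $t=1$ and $t=2$, one computes $\lfloor(t+1)/2\rfloor=1$ in both cases, so the exceptional value of $r$ is $r=1$. Since we are assuming $r\ge 2$, the proposition applies and yields $s_1(2r)=s_1(2r-1)$ and $s_2(2r)=s_2(2r-1)$. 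Summing gives $u(2r)=u(2r-1)$.

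The only mildly delicate point is making sure the threshold lines up: Theorem~\ref{u-pari} starts at $n\ge 2$ and Proposition~\ref{st-pari}'s exception for $t\in\{1,2\}$ occurs precisely at $r=1$, so the corollary's range $r\ge 2$ is exactly the range where everything works without exception. No substantial obstacle is expected; the proof is essentially a bookkeeping exercise, which is presumably why the corollary is stated without proof in the excerpt.
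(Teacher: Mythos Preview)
Your proof is correct and matches the paper's intent: the corollary is stated with a \qed\ and no explicit argument, precisely because it follows immediately from Proposition~\ref{st-pari} applied to $t=1,2$ (where the exceptional value is $r=1$) together with the decomposition $u(n)/2=s_1(n)+s_2(n)$ from Theorem~\ref{u-pari}. An equally natural variant would use Corollary~\ref{cor:u-as-s} and Proposition~\ref{s-pari} in place of Theorem~\ref{u-pari} and the $t=1$ case, but this is the same idea.
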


We now define
$$s^*_t(r):=
\left\{
\begin{array}{cl}
s_t(2r) &  \text{if $r$ is odd} \\
s_t(2r-1)  & \text{if $r$ is even}
\end{array}
\right.\:.$$
Proposition \ref{st-pari} shows
that for all $r \ne \lfloor \frac{t+1}{2}\rfloor$ we
have  $s^*_t(r)=s_t(2r-1)=s_t(2r)$.
Also  $s^*_t(r)=0$ for $1 \le r \le t$,
$r \ne\lfloor \frac{t+1}{2}\rfloor$ and  $s^*_t(t+1)=1$.

\smallskip

In analogy with Proposition~\ref{s-recur} we have:
\begin{prop}\label{s*recur}
For all $r \ge t+2$,
$s^*_t(r)=s^*_t(r-1)+s^*_t(\lfloor \frac{r}{2} \rfloor)$.
\end{prop}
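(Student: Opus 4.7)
The plan is to adapt the argument used for Proposition \ref{s-recur}. That proof proceeded by reducing the recurrence for $s^*$ to the two identities
\[
s(4r') = s(4r'-2) + s(2r') \quad\text{and}\quad s(4r'+2) = s(4r') + s(2r'),
\]
each of which was a direct consequence of the two recurrences in Proposition \ref{s-pari}. Mirroring that strategy here, I would aim to establish the $s_t$-analogues
\[
s_t(4r') = s_t(4r'-2) + s_t(2r') \quad\text{and}\quad s_t(4r'+2) = s_t(4r') + s_t(2r'),
\]
in the range forced by $r \ge t+2$, and then translate them via the piecewise definition of $s^*_t$ into the stated recurrence.

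Two ingredients are needed. The first, (a), is $s_t(2q) = s_t(2q-1)$, which is already part of Proposition \ref{st-pari} and is valid for $q \ne \lfloor (t+1)/2 \rfloor$. The second, (b), is an $s_t$-analogue of the odd-argument identity of Proposition \ref{s-pari}, namely
\[
s_t(2m+1) = s_t(2m) + s_t(m) \qquad \text{whenever } 2m+1 > t.
\]
I would prove (b) by refining the bijection used for Proposition \ref{s-pari}. Given an $rb_t$-partition $\la$ of $2m+1$ (which automatically contains a part $1$), remove one copy of $1$. If $\la$ has at least two $1$s, the result is an $rb_t$-partition of $2m$, with the largest part and its multiplicity $t$ untouched. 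If $\la$ has exactly one $1$, then all remaining parts are even with largest $2^k$ for some $k \ge 1$; halving yields an $rb_t$-partition of $m$ whose largest part $2^{k-1}$ still has multiplicity $t$. The condition $2m+1 > t$ excludes the degenerate case $\la = (1^{2m+1})$.

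With (a) and (b) in hand, the two target identities for $s_t$ follow by the same one-line chain of substitutions as in Proposition \ref{s-recur}. Translating back through the piecewise definition of $s^*_t$ requires a short case analysis according to the parities of $r$ and $\lfloor r/2 \rfloor$, each sub-case reducing to a single application of (a) or (b). The hypothesis $r \ge t+2$ is exactly what guarantees that every $q$ at which (a) is invoked (namely $q = r,\, r-1,\, r/2,\, (r-1)/2$ across the various sub-cases) satisfies $q > (t+1)/2$, so that the exceptional value $q = \lfloor (t+1)/2 \rfloor$ is avoided. I expect the only real obstacle to be keeping the parity sub-cases straight; once that bookkeeping is done, the verification is routine.
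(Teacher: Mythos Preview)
Your plan is essentially the paper's own argument: first establish the two $s_t$-recursions (your (a) and (b)), then run a case analysis modulo~$4$ on~$r$ to translate the piecewise definition of $s^*_t$ into those recursions. The paper writes the four cases as $r=4s,4s+1,4s+2,4s+3$ and reduces each to an identity in~$s_t$, exactly as you describe.

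There is, however, a genuine boundary issue that your final paragraph does not quite handle, and which the paper's proof also glosses over. You claim that $r\ge t+2$ forces every $q$ at which (a) is invoked to satisfy $q>(t+1)/2$. But in the sub-case $r=4s+1$ you need $s_t(4s)=s_t(4s-1)$, i.e.\ (a) at $q=2s=(r-1)/2$; when $r=t+2$ this gives $q=(t+1)/2$, which for $t\equiv 3\pmod 4$ is exactly the excluded value $\lfloor(t+1)/2\rfloor$. And indeed (a) fails there: for $t=3$ one has $s_3(4)=0\ne 1=s_3(3)$. Worse, the proposition itself is false at such points: with $t=3$, $r=5$ one computes $s^*_3(5)=s_3(10)=1$ while $s^*_3(4)+s^*_3(2)=s_3(7)+s_3(3)=1+1=2$. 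So the stated range $r\ge t+2$ is slightly too generous when $t\equiv 3\pmod 4$; the argument (yours and the paper's) only establishes the recurrence for $r\ge t+3$ in that residue class. This does not affect the paper's applications, which use only $t=1,2$, but you should flag the discrepancy rather than assert that the bound $r\ge t+2$ ``is exactly what guarantees'' the needed inequalities.
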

\proof
The assumption on $r$ and $t$ implies that the partitions $(1^r)$ and
$(1^{r-1})$ are not~$rb_t$. Therefore the bijections in the proof of
Proposition \ref{s-pari} work for $rb_t$-partitions of $r$ as well.
Thus we have the recursions
\begin{center}
$
\begin{array}{rcl}
s_t(2r)&=&s_t(2r-1) \\
s_t(2r+1)&=&s_t(2r)+s_t(r)\:.
\end{array}
$
\end{center}
In the case $t=1, r=3$ we have $s^*_1(3)=s_1(6)=1$ and
$s^*_1(2)+s^*_1(1)=s_1(3)+s_1(2)=1+0=1$.
We assume $r \ge 4$ and write  $r=4s+i$, $s\in \N$,
$i \in \{0,1,2,3\}$. We show
\begin{center}
$
\begin{array}{l}
s^*_t(4s)=s^*_t(4s-1)+s^*_t(2s) \\
s^*_t(4s+1)=s^*_t(4s)+s^*_t(2s)\\
s^*_t(4s+2)=s^*_t(4s+1)+s^*_t(2s+1)\\
$$s^*_t(4s+3)=s^*_t(4s+2)+s^*_t(2s+1)
\end{array}
$
\end{center}
By definition of $s^*_t$ the equations are equivalent to
\begin{center}
$
\begin{array}{l}
s_t(8s-1)=s_t(8s-2)+s_t(4s-1)\\
s_t(8s+2)=s_t(8s-1)+s_t(4s-1)\\
s_t(8s+3)=s_t(8s+2)+s_t(4s+2)\\
s_t(8s+6)=s_t(8s+4)+s_t(4s+2)\:.
\end{array}
$
\end{center}
These follow from the recursions for $s_t$.
\qed

\medskip

Lastly, we define \ $w(n)=\frac{u(2n)}{2}$. Theorem~\ref{u-pari}
and Proposition~\ref{s*recur} yield the following:
\begin{prop} \label{u*1recur}
For each $n\geq 1$,  $w(n)=s^*_1(n)+s^*_2(n)$.
Moreover,  for $n \ge 3$,
$w(n)=w(n-1)+w(\lfloor \frac{n}{2} \rfloor)$,
and $w(1)=w(2)=1$.
\qed
\end{prop}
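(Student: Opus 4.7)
The plan is to split the proposition into two parts: the identity $w(n) = s^*_1(n) + s^*_2(n)$ for every $n\ge 1$, and the recursion together with the base values. Both parts reduce to bookkeeping, combining Theorem~\ref{u-pari}, Proposition~\ref{st-pari}, and Proposition~\ref{s*recur}.

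For the identity, I would first apply Theorem~\ref{u-pari} to $2n$, which yields $w(n) = u(2n)/2 = s_1(2n) + s_2(2n)$ for every $n \ge 1$. Then I need to recognize $s_t(2n)$ as $s^*_t(n)$ for $t \in \{1,2\}$. If $n$ is odd, the definition of $s^*_t$ already gives $s^*_t(n) = s_t(2n)$. If $n$ is even (so $n \ge 2$), the definition gives $s^*_t(n) = s_t(2n-1)$, and Proposition~\ref{st-pari} says $s_t(2n-1) = s_t(2n)$ whenever $n \ne \lfloor (t+1)/2 \rfloor = 1$, which indeed holds. Summing over $t \in \{1,2\}$ gives the asserted identity.

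For the recursion at $n \ge 4$, Proposition~\ref{s*recur} applies to both $t=1$ (which needs $r \ge 3$) and $t=2$ (which needs $r \ge 4$), giving $s^*_t(n) = s^*_t(n-1) + s^*_t(\lfloor n/2 \rfloor)$ for $t \in \{1,2\}$; adding these two equalities and invoking the identity already established turns them into $w(n) = w(n-1) + w(\lfloor n/2 \rfloor)$. The case $n = 3$ is not covered by Proposition~\ref{s*recur} for $t=2$, so I would verify it directly: from the definitions and Proposition~\ref{st-pari} one computes $s^*_2(1) = 1$, $s^*_2(2) = 0$, $s^*_2(3) = 1$, so the $t = 2$ recursion at $r = 3$ holds by inspection, and combining this with the $t = 1$ case of Proposition~\ref{s*recur} (valid at $r = 3$) gives $w(3) = w(2) + w(1)$. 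The base values $w(1) = w(2) = 1$ follow from the identity together with direct evaluation of $s^*_t$ at $r = 1, 2$ (equivalently, from $u(2) = u(4) = 2$, which follows at once from Theorem~\ref{upclass}).

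The whole argument is essentially bookkeeping; the main obstacle is keeping the small-index exceptions in Propositions~\ref{st-pari} and~\ref{s*recur} straight, and in particular handling the case $n = 3$ in the recursion by direct verification of the $t = 2$ relation rather than appealing to the general recursion.
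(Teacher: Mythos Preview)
Your proposal is correct and follows exactly the route the paper intends: the proposition is stated with a \qed\ and is meant to be an immediate consequence of Theorem~\ref{u-pari} and Proposition~\ref{s*recur}, and you have simply spelled out the bookkeeping. Your explicit treatment of the case $n=3$ (where Proposition~\ref{s*recur} for $t=2$ does not apply) is a point the paper passes over in silence, so in that respect your write-up is slightly more careful than the original.
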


\begin{rem}
{\rm
Proposition \ref{u*1recur} shows that the sequence
of numbers $w(n)$ is listed in
\cite{OEIS} as A075535. The simple recurrence relation is  used in the
next section to prove congruence results for the numbers
$w(n)$ and thus for the
numbers $u(n)$ of unique path partitions.
}
\end{rem}

\begin{rem}\label{rem:w_i-seq}
{\rm
We may consider also
$w_2(n):= s^*_3(n)+s^*_4(n)$.  Then we have
$w_2(1)=0, w_2(2)=1, w_2(3)=0, w_2(4)=1$, and  for $n \ge 5$ \\
\centerline{$w_2(n)=w_2(n-1)+w_2(\lfloor\frac{n}{2}\rfloor)$.}
Similar recurrence relations are more generally valid for
\\
\centerline{$w_r(n):=s^*_{2r-1}(n)+s^*_{2r}(n)$} 
which starts by
$w_r(1)=...=w_r(r-1)=0, w_r(r)=1, w_r(r+1)=...=w_r(2r-1)=0, w_r(2r)=1$.
This is an infinite family of sequences which may satisfy
congruence relations similar to those satisfied by $w_1(n)=w(n)$.
}
\end{rem}

In the next section we discuss congruences for
$w(n)$ and in part also for the $w_i(n)$'s.

\section{Congruences for the number of {\it up}-partitions}

In this section we investigate arithmetical
properties of $u(n)$,  the number
of {\it up}-partitions of~$n$.
Since $w(n)=\frac{u(2n)}2$, any result on the $w$-sequences
may be translated into a result on the $u$-sequence.
In particular, as 
studying congruences of the $u$-sequence modulo~$2m$
is equivalent to studying the $w$-sequence modulo~$m$,
we will concentrate on the latter sequence.

\smallskip

At the start, we consider a more general situation that
also covers the more general sequences defined in Remark~\ref{rem:w_i-seq};
however, in the remaining part of this section we restrict our attention
to the numbers~$w(n)$.

\begin{prop}\label{prop:oddeven-her}
Let $(a(n))_{n\in \N}$ be a sequence with\, $a(c)$, $a(2c)$ odd
for some $c\in \N$, $a(m)$ even when $c < m < 2c$,
and $a(n)=a(n-1)+a(\lfloor \frac n2 \rfloor )$ for $n\geq 2c$.
Then for $n\geq c$, $a(n)$ is odd exactly when $n$ is of the form
$2^d c$.
\end{prop}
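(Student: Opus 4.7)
The plan is to argue by strong induction on $n \geq c$. For $c \leq n \leq 2c$ the hypotheses of the proposition directly give the claim: $a(c)$ and $a(2c)$ are odd (and these are the only $n$ of the form $2^d c$ in this range, corresponding to $d=0,1$), while $a(m)$ is even for $c<m<2c$. For $n > 2c$ I would use the recurrence reduced mod $2$, i.e.\ $a(n) \equiv a(n-1) + a(\lfloor n/2 \rfloor) \pmod{2}$, and analyze, for each of the two arguments $m \in \{n-1, \lfloor n/2 \rfloor\}$, whether $m$ lies in the set $C := \{2^e c : e \geq 0\}$.

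The key bookkeeping lemma I would establish first is: for $n > 2c$, the multiset $\{n-1, \lfloor n/2 \rfloor\}$ meets $C$ in either $0$ or $2$ elements. For one direction, if $n-1 = 2^e c \in C$ then $e \geq 1$ (since $n > 2c$), so $2^e c$ is even, $n = 2^e c + 1$ is odd, and therefore $\lfloor n/2 \rfloor = 2^{e-1} c \in C$. Conversely, if $\lfloor n/2 \rfloor = 2^e c$ then $n \in \{2^{e+1}c,\, 2^{e+1}c+1\}$; should $n$ itself fail to lie in $C$, the only possibility is $n = 2^{e+1}c+1$, giving $n-1 \in C$.

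Given this mini-lemma, the induction step splits into three cases. First, if $n = 2^d c$ with $d \geq 2$, then $\lfloor n/2 \rfloor = 2^{d-1}c \in C$, but $n-1 \notin C$ (since $c(2^d - 2^e) = 1$ has no solution in integers for $c \geq 1$ and $d \geq 2$); by induction $a(n-1)$ is even and $a(\lfloor n/2 \rfloor)$ is odd, so $a(n)$ is odd, as desired. Second, if $n \notin C$ and both $n-1$ and $\lfloor n/2 \rfloor$ lie in $C$, induction gives two odd summands, hence $a(n)$ even. Third, if $n \notin C$ and neither lies in $C$, induction gives two even summands, again $a(n)$ even. Combined with the mini-lemma, these exhaust the possibilities.

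The main obstacle, and essentially the only nontrivial part, is the mini-lemma — one must handle carefully the small cases where $n - 1 = c$ (which cannot occur because of $n > 2c$) and confirm that $n \in C$ and $n-1 \in C$ cannot happen simultaneously. Both are elementary facts about consecutive integers and multiples of powers of $2$, but they need to be pinned down precisely before the parity computation becomes routine.
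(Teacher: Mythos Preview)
Your argument is correct and is essentially the paper's proof, just repackaged: the paper does a block-by-block induction over the intervals $[2^r c,\,2^{r+1}c]$, whereas you run a single strong induction governed by your mini-lemma, but the parity bookkeeping is identical. One small wording fix: your mini-lemma as stated (``meets $C$ in either $0$ or $2$ elements'') actually fails when $n\in C$ itself (then $\lfloor n/2\rfloor\in C$ but $n-1\notin C$), so it should be stated only for $n\notin C$ --- which is precisely how you use it, since Case~1 handles $n\in C$ separately.
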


\proof
Certainly the assertion is true for $n=c$ and~$n=2c$.
Assume the result holds up to some number $n=2^r c$, $r\geq 1$.
Then
$$a(n+1)=a(n)+a(\lfloor \frac n2 \rfloor )= a(2^r c)+a(2^{r-1} c)
\equiv 0 \mod 2 \:.$$
For any $k$ with $2\leq k \leq 2^r c -1$,  we then get by induction on $k$ that
$$a(n+k)=a(n+k-1)+a(\lfloor \frac{n+k}2 \rfloor ) \equiv 0 \mod 2\:$$
since $2^{r-1} c<\lfloor \frac{n+k}2 \rfloor < 2^r c$.
For $k=2^r c$ we then obtain
$$a(2^{r+1}c)=a(2^{r+1}c-1)+a(2^r c) \equiv 1 \mod 2\:.$$
Hence the assertion is proved.
\qed

\medskip

\begin{cor}
Let $(a(n))_{n\in \N}$  be as in Proposition~\ref{prop:oddeven-her}.
Let $m$ be an odd number such that
$2^b c+1<m\leq 2^{b+1}c-1$ for some $b$.
Then $a(m) \equiv a(m-2) \mod 4$.
In particular, $a(m)\equiv a(2^bc+1) \mod 4$.
\end{cor}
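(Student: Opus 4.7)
The plan is to unwind the recurrence twice and use the characterization of odd values from Proposition~\ref{prop:oddeven-her} to force divisibility by~$4$. Since $m > 2^b c + 1 \geq 2c + 1$ (implicitly we need $b \geq 1$ so the recurrence applies at $m$ and $m-1$), I would first write
\[
a(m) = a(m-1) + a(\lfloor m/2 \rfloor), \qquad a(m-1) = a(m-2) + a(\lfloor (m-1)/2 \rfloor),
\]
and subtract to obtain $a(m) - a(m-2) = a(\lfloor m/2\rfloor) + a(\lfloor (m-1)/2\rfloor)$.

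The next step uses that $m$ is odd: both floors equal $(m-1)/2$, so
\[
a(m) - a(m-2) = 2\, a\!\left(\tfrac{m-1}{2}\right).
\]
Thus it suffices to prove that $a((m-1)/2)$ is even, which is where Proposition~\ref{prop:oddeven-her} enters. From the hypothesis $2^b c + 1 < m \leq 2^{b+1}c - 1$ with $m$ odd, I would derive
\[
2^{b-1}c < \tfrac{m-1}{2} < 2^b c,
\]
so $(m-1)/2$ is an integer lying strictly between the consecutive values $2^{b-1}c$ and $2^b c$ of the form $2^d c$. By Proposition~\ref{prop:oddeven-her}, $a((m-1)/2)$ is therefore even, and hence $a(m) - a(m-2) \equiv 0 \pmod 4$.

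For the second statement, I would iterate: every odd $m'$ with $2^b c + 1 < m' \leq m$ still satisfies $2^b c + 1 < m' \leq 2^{b+1}c - 1$, so the same argument gives $a(m') \equiv a(m'-2) \pmod 4$. Chaining these congruences from $m$ down in steps of $2$ to the smallest odd value in the range (namely $2^b c + 1$, whose odd parity is forced by the parity of $m$) yields $a(m) \equiv a(2^b c + 1) \pmod 4$. The only real obstacle is bookkeeping the inequalities to confirm $(m-1)/2$ is not of the exceptional form $2^d c$; once that is in place, the proof is a short computation leveraging the previous proposition.
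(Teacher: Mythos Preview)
Your argument is essentially the same as the paper's: both expand the recurrence to obtain $a(m)-a(m-2)=2\,a\!\left(\tfrac{m-1}{2}\right)$ (using that $m$ is odd so the two floors coincide), then observe that $\tfrac{m-1}{2}$ lies strictly between $2^{b-1}c$ and $2^{b}c$ and hence is not of the form $2^{d}c$, so Proposition~\ref{prop:oddeven-her} gives the needed parity. Your write-up is a bit more explicit than the paper's in bounding $(m-1)/2$ and in spelling out the chaining for the ``in particular'' clause, and your remark that one implicitly needs $b\geq 1$ for the recurrence to apply at $m$ and $m-1$ is a fair observation the paper leaves tacit.
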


\proof
Since $m$ is odd, we have
$$a(m)=a(m-1)+a(\lfloor \frac m2 \rfloor )
= a(m-2)+2\ a(\lfloor \frac m2 \rfloor )\:.$$
As $m-1$ is not of the form $2^dc$, $\lfloor \frac m2 \rfloor > 2^{b-1}c$ is not
either. Hence, $a(\lfloor \frac m2 \rfloor )$ is even,
and then the claim follows.
\qed

\medskip

Since $w(1)=w(2)=1$, the following is immediate,
and it gives corresponding congruences modulo~4 and~8 for $u(n)$:

\begin{cor}\label{cor:w-basic}
For $n\geq 1$,
$w(n)$ is even exactly when $n$ is not a 2-power.

For any odd number $m$ such that $2^b +1 \leq m \leq 2^{b+1}-1$,
\\
\centerline{$w(m) \equiv w(2^b+1) \mod 4$.}
\qed
\end{cor}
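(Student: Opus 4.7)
The plan is to derive both statements as direct specializations of Proposition~\ref{prop:oddeven-her} and the subsequent (unlabelled) corollary, applied to the sequence $a(n):=w(n)$ with the parameter $c=1$. The point is that $w$ satisfies exactly the structural hypotheses built into that framework: the base values $w(1)=w(2)=1$ are both odd, the ``evenness between $c$ and $2c$'' assumption is vacuous since no integer lies strictly between $1$ and $2$, and Proposition~\ref{u*1recur} supplies the recurrence $w(n)=w(n-1)+w(\lfloor n/2 \rfloor)$ for $n\geq 3$, which is precisely what the inductive step in the proof of Proposition~\ref{prop:oddeven-her} requires (the values at $n=c=1$ and $n=2c=2$ serve as base cases and need no recurrence).

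With the hypotheses verified, the first statement is just Proposition~\ref{prop:oddeven-her} read at $c=1$: for $n\geq 1$, $w(n)$ is odd exactly when $n$ has the form $2^d\cdot 1 = 2^d$, equivalently, $w(n)$ is even precisely when $n$ is not a power of~$2$. The second statement follows in the same way from the corollary immediately after Proposition~\ref{prop:oddeven-her}: specializing to $c=1$ gives $w(m)\equiv w(2^b+1) \mod 4$ for every odd $m$ with $2^b+1 < m \leq 2^{b+1}-1$. To match the range $2^b+1\leq m \leq 2^{b+1}-1$ in the statement of Corollary~\ref{cor:w-basic}, I would simply note that the left endpoint $m=2^b+1$ yields the tautology $w(2^b+1)\equiv w(2^b+1) \mod 4$.

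There is no real obstacle here: everything is a plug-in of $c=1$ into previously established general results. The only bookkeeping is to confirm that the recurrence from Proposition~\ref{u*1recur} covers the range needed by the induction in Proposition~\ref{prop:oddeven-her} and to absorb the boundary case $m=2^b+1$ into the mod-$4$ congruence statement.
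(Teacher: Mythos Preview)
Your proposal is correct and follows exactly the paper's approach: the paper merely notes ``Since $w(1)=w(2)=1$, the following is immediate,'' which is precisely your specialization of Proposition~\ref{prop:oddeven-her} and its subsequent corollary to $a=w$ with $c=1$. Your additional remark that the recurrence from Proposition~\ref{u*1recur} begins only at $n=3$ (not $n=2c=2$) but that the inductive proof of Proposition~\ref{prop:oddeven-her} never actually uses the recurrence at $n=2c$ is a valid and careful observation that the paper silently elides.
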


\medskip

Note that the first part of Corollary \ref{cor:w-basic} implies infinitely many Ramanujan--like
congruences modulo 4 satisfied by $u(n)$.
To further understand the congruences of $u(n)$ mod~8, we first focus on the 2-powers.
Set 
$v(k)=w(2^k)$ for $k\in \N_0$.

\begin{prop}\label{prop:v-rec}
For each $k\geq 2$,
$$v(k) \equiv 2 v(k-1)+v(k-2) \mod 4\:.$$
\end{prop}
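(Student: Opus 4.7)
The plan is to derive this congruence from the basic recurrence $w(n)=w(n-1)+w(\lfloor n/2 \rfloor)$ together with Corollary~\ref{cor:w-basic}, which already gives a mod~4 control on $w$ at odd arguments. The strategy is to expand $v(k)=w(2^k)$ one step via the recurrence, reduce the awkward term $w(2^k-1)$ modulo~4 using the corollary, and recognize the result as $2v(k-1)+v(k-2)$.

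First I would write
\[
v(k)=w(2^k)=w(2^k-1)+w(2^{k-1})=w(2^k-1)+v(k-1),
\]
so the task reduces to showing $w(2^k-1)\equiv v(k-1)+v(k-2) \pmod 4$. Since $m=2^k-1$ is odd and satisfies $2^{k-1}+1\leq m\leq 2^k-1$, Corollary~\ref{cor:w-basic} gives
\[
w(2^k-1)\equiv w(2^{k-1}+1) \pmod 4.
\]

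Next I would compute $w(2^{k-1}+1)$ directly from the recurrence. For $k\geq 2$ we have $2^{k-1}+1\geq 3$ and $\lfloor (2^{k-1}+1)/2\rfloor = 2^{k-2}$, so
\[
w(2^{k-1}+1)=w(2^{k-1})+w(2^{k-2})=v(k-1)+v(k-2).
\]
Combining these two displays yields $w(2^k-1)\equiv v(k-1)+v(k-2)\pmod 4$, and substituting back gives $v(k)\equiv 2v(k-1)+v(k-2)\pmod 4$, as required.

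There is no real obstacle here; the only care needed is to verify the edge case $k=2$, where $2^{k-1}+1=3$ lies at the boundary of the range in Corollary~\ref{cor:w-basic} (so the mod~4 statement is trivially true for $m=3$), and to check that $v(0)=w(1)=1$ is the value used. Everything else is mechanical unwinding of the recursion.
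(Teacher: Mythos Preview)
Your proof is correct and follows essentially the same route as the paper's own argument: expand $v(k)=w(2^k)$ via the recurrence, invoke Corollary~\ref{cor:w-basic} to replace $w(2^k-1)$ by $w(2^{k-1}+1)$ modulo~4, and apply the recurrence once more to $w(2^{k-1}+1)$. The paper presents this as a two-line chain of congruences, while you spell out the intermediate reduction and the boundary check at $k=2$, but the logic is identical.
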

\proof
Using Corollary \ref{cor:w-basic}, we have the following congruences mod~4:
$$
\begin{array}{rcl}
v(k)&=&w(2^k)=w(2^{k-1})+w(2^k-1)
\equiv w(2^{k-1})+w(2^{k-1}+1) \\[6pt]
&\equiv & 2\ w(2^{k-1}) + w(2^{k-2})= 2\ v(k-1) + v(k-2)\:.
\end{array}
$$
\vspace{-6ex}

\qed

\medskip

\begin{prop}\label{prop:v-mod8}
For each $k\geq 1$,
$$
v(k)=w(2^k) \equiv
\left\{
\begin{array}{cll}
k & \mod 8 & \text{if $k$ is odd} \\
k+1 & \mod 8 & \text{if $k$ is even}
\end{array}
\right.\:.
$$
Equivalently,
$$v(k) \equiv 2 \left\lfloor \frac{k}2 \right\rfloor +1 \mod 8 \:.$$
\end{prop}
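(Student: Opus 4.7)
The strategy is to reduce the mod-$8$ claim for $v(k)=w(2^k)$ to a mod-$4$ claim about a partial-sum sequence, which is then governed by a simple linear recurrence.

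The first move is to establish, for $k\ge 2$, the first-difference identity
\[
v(k)-v(k-1)\;=\;2\,S_k,\qquad S_k:=\sum_{\ell=1}^{2^{k-1}-1} w(\ell).
\]
This follows by telescoping $w(2m+1)-w(2m-1)=2w(m)$ (valid for $m\ge 2$ by the two halves of the $w$-recurrence in Proposition~\ref{u*1recur}) and then setting $m=2^{k-1}$. With this in hand, knowing $v(k)\pmod 8$ is equivalent to knowing $S_k\pmod 4$.

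The key step is the recurrence
\[
S_k\;\equiv\;2S_{k-1}-S_{k-2}+2\pmod 4\qquad(k\ge 3).
\]
I would split the block $S_k-S_{k-1}=\sum_{\ell=2^{k-2}}^{2^{k-1}-1}w(\ell)$ by parity of $\ell$; using $w(2m+1)=w(2m)+w(m)$, the block rewrites as $2E_k+(S_{k-1}-S_{k-2})$, where $E_k:=\sum_{m=2^{k-3}}^{2^{k-2}-1} w(2m)$. The decisive input is Corollary~\ref{cor:w-basic}: since $w(j)$ is odd iff $j$ is a power of $2$ and the only power of $2$ in the interval $[2^{k-3},2^{k-2}-1]$ is $2^{k-3}$ itself, exactly one summand of $E_k$ (namely $w(2^{k-2})=v(k-2)$) is odd. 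Hence $E_k$ is odd, $2E_k\equiv 2\pmod 4$, and the recurrence follows.

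To finish, the base values $S_1=0$ and $S_2=1$ together with the mod-$4$ recurrence above immediately give, by induction, $S_k\equiv 0\pmod 4$ for odd $k$ and $S_k\equiv 1\pmod 4$ for even $k$. Plugging this into the first-difference identity, $v(k)\equiv v(k-1)\pmod 8$ when $k$ is odd and $v(k)\equiv v(k-1)+2\pmod 8$ when $k$ is even; starting from $v(1)=1$, a final induction then forces $v(k)\equiv 2\lfloor k/2\rfloor+1\pmod 8$. The only non-routine step is the parity count for $E_k$: once one has set up the block decomposition and invoked Corollary~\ref{cor:w-basic} to isolate the unique $2$-power in the index range, the rest is telescoping and short mod-$4$ inductions.
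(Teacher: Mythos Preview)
Your argument is correct. The first-difference identity $v(k)-v(k-1)=2S_k$ follows exactly as you indicate (telescope $w(2m+1)-w(2m-1)=2w(m)$ up to $m=2^{k-1}-1$ to get $w(2^k-1)=2S_k$, then use $w(2^k)=w(2^k-1)+w(2^{k-1})$); the parity-split rewriting of $S_k-S_{k-1}$ is clean; and the decisive count --- that exactly one $w(2m)$ in $E_k$ is odd because $2^{k-3}$ is the unique $2$-power in $[2^{k-3},2^{k-2}-1]$ --- is precisely the use of Corollary~\ref{cor:w-basic} that makes the whole thing work. The mod-$4$ recurrence $S_k\equiv 2S_{k-1}-S_{k-2}+2$ and the final induction are routine, and the numerics check.

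For comparison, the paper organizes the same ingredients a bit differently: it telescopes down to a \emph{two}-step relation
\[
v(k)=v(k-2)+2\sum_{i=2^{k-2}+1}^{2^{k-1}} w(i),
\]
and then proves directly, by an induction that again relies on Corollary~\ref{cor:w-basic}, that each block sum $\sum_{i=2^d+1}^{2^{d+1}} w(i)$ is $\equiv 5\pmod 8$. This yields $v(k)\equiv v(k-2)+2\pmod 8$ in one stroke. So both proofs hinge on the same parity input and on telescoping the $w$-recurrence; the paper works with the dyadic block sums and a two-step recursion on $v$, while you work with the cumulative sums $S_k$ and a one-step recursion on $v$ together with a two-step recursion on $S_k$. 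Your route introduces one more auxiliary recurrence but gives the slightly finer information $v(k)-v(k-1)\equiv 0$ or $2\pmod 8$ according to the parity of~$k$, whereas the paper's route gets straight to $v(k)\equiv v(k-2)+2\pmod 8$. Either way the endpoint is the same.
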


\proof
From the recursion formula we have
$$
\begin{array}{rcl}
w(2^k)
&=& w(2^{k-1})+w(2^k-1) = w(2^{k-1})+w(2^{k-1}-1)+w(2^k-2)\\[8pt]
&=& w(2^{k-1})+2 w(2^{k-1}-1)+w(2^k-3) \\
&\vdots & \\
&=& w(2^{k-1})+ 2 w(2^{k-1}-1)+...+ 2 w(2^{k-2}+1)+w(2^{k-1}+1)\\[8pt]
&=&  2 w(2^{k-1})+ 2 w(2^{k-1}-1)+...+ 2 w(2^{k-2}+1)+w(2^{k-2})
\end{array}
$$
and we now investigate sums of the form
$\ds \sum_{i=2^d+1}^{2^{d+1}} w(i)$,
for $d\geq 1$.
We want to show by induction that they are
always congruent to $5$~mod~$8$; for $d=1$, $w(3)+w(4)=2+3=5$, so the
claim holds.
Now we have for any $d\geq 2$ (using induction and the corollary):
$$
\begin{array}{rcl}
\ds
\sum_{i=2^d+1}^{2^{d+1}} w(i)
&=&
\ds
\sum_{i=2^{d-1}+1}^{2^{d}} w(2i)+\sum_{i=2^{d-1}+1}^{2^{d}} w(2i-1)
\\[8pt]
&=&
\ds
\sum_{i=2^{d-1}+1}^{2^{d}} w(i) + 2 \sum_{i=2^{d-1}+1}^{2^{d}} w(2i-1)
\\[12pt]
&\equiv & 5 + 2^d w(2^{d}+1) \mod 8 \\[8pt]
& \equiv & 5 \mod 8 \:.
\end{array}
$$
We can now continue to compute $w(2^k) \mod 8$ for $k\geq 2$:
$$
\begin{array}{rcl}
w(2^k)
&= &
\ds 2 \sum_{i=2^{k-2}+1}^{2^{k-1}} w(i) + w(2^{k-2}) \\[8pt]
&\equiv & 2 + w(2^{k-2}) \mod 8 \:.
\end{array}
$$
Starting with $w(2^0)=1=w(2^1)$, the assertion now follows easily.
\qed

\medskip

We now obtain full information on the congruences modulo~8 for
the $u$-sequence via the following result on the $w$-sequence
modulo~4.
\begin{theorem}\label{thm:w-mod4}
Let $n\in \N$, $n$ not a 2-power.
Write $\ds n=\sum_{i=0}^k 2^{n_i}$ with $n_0<n_1<\ldots <n_k$.
Then we have
$$
w(n) \equiv
\left\{
\begin{array}{cl}
0 \mod 4 & \text{ if } n_0 \equiv 3 \mod 4 \\
& \text{ or } n_0 \equiv 0 \mod 4 \text{ and $n_k$ is even}\\
& \text{ or } n_0 \equiv 2 \mod 4 \text{ and $n_k$ is odd}\\
2 \mod 4 & \text{if } n_0 \equiv 1 \mod 4 \\
& \text{ or } n_0 \equiv 0 \mod 4 \text{ and $n_k$ is odd}\\
& \text{ or } n_0 \equiv 2 \mod 4 \text{ and $n_k$ is even}\\
\end{array}
\right.
$$
\end{theorem}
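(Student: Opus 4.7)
The plan is to induct strongly on $n$ using the recursion of Proposition~\ref{u*1recur}, which modulo~$4$ reads $w(n)\equiv w(n-1)+w(\lfloor n/2\rfloor)\pmod 4$. When $n$ is even this becomes $w(n)\equiv w(n-1)+w(n/2)\pmod 4$, with $n-1$ odd and $n/2<n$; so I first settle the odd case outright (this also handles the smallest non-$2$-power, $n=3$) and then close the even case by induction.

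For odd $n\ge 3$ with top bit $n_k$, the bounds $2^{n_k}+1\le n\le 2^{n_k+1}-1$ together with Corollary~\ref{cor:w-basic} give $w(n)\equiv w(2^{n_k}+1)\pmod 4$. The recurrence expands $w(2^{n_k}+1)=v(n_k)+v(n_k-1)$, and substituting the residues from Proposition~\ref{prop:v-mod8} (together with the trivial $v(0)=1$) yields $v(n_k)+v(n_k-1)\equiv 2n_k\pmod 4$. Hence $w(n)\equiv 2\pmod 4$ when $n_k$ is odd and $w(n)\equiv 0\pmod 4$ when $n_k$ is even. Since odd $n$ has $n_0=0\equiv 0\pmod 4$, this matches the two rows of the theorem's table governed by $n_0\equiv 0\pmod 4$.

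For even $n$ not a $2$-power, set $m=n/2$; then $m$ is also not a $2$-power and has binary positions $m_i=n_i-1$. The odd-case analysis applies to $2m-1$ (whose bottom bit is $0$ and top bit is $n_k$) and the induction hypothesis applies to $m$ (bottom bit $n_0-1$, top bit $n_k-1$). Combining these through $w(n)\equiv w(2m-1)+w(m)\pmod 4$ reduces the claim to a short table obtained by splitting on $n_0\bmod 4\in\{1,2,3,0\}$ and, where relevant, on the parity of $n_k$; in each subcase the two contributions add to the residue prescribed by the theorem. The main obstacle is purely bookkeeping: one must track carefully how $n_0\bmod 4$ and the parity of $n_k$ transform under halving $n\mapsto m$ and under $n\mapsto 2m-1$, but each of the (at most eight) subcases collapses to a one-line verification, and no new combinatorial idea is required beyond the two inputs above.
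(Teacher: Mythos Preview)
Your argument is correct. The odd case is handled exactly as in the paper (reduce to $w(2^{n_k}+1)=v(n_k)+v(n_k-1)$ via Corollary~\ref{cor:w-basic} and read off the residue from Proposition~\ref{prop:v-mod8}), and your even case is a genuine alternative to the paper's treatment.

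The paper, instead of a single inductive halving step, unrolls the recursion twice to obtain the ``$2$-step reduction''
\[
w(n)\equiv 2+w\!\left(\sum_i 2^{\,n_i-2}\right)\pmod 4
\]
(valid when $n_0\ge 2$), and then iterates this until $n_0$ drops to $0$, $1$, or $2$, finishing each residue class of $n_0\bmod 4$ separately with Proposition~\ref{prop:v-rec}. Your route is more economical: once the odd case is known, the even case follows from \emph{one} application of $w(n)\equiv w(n-1)+w(n/2)$, with $w(n-1)$ supplied by the odd case (top bit $n_k$) and $w(n/2)$ by the inductive hypothesis (bottom bit $n_0-1$, top bit $n_k-1$). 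The resulting eight-line table you allude to is exactly the verification needed, and it removes the need to invoke Proposition~\ref{prop:v-rec} explicitly. The trade-off is that the paper's $2$-step reduction makes the period-$4$ dependence on $n_0$ visible structurally, whereas in your approach it emerges only from the case check.
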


\begin{proof}
Assume that $n_0\geq 1$; then $m=n-1$ is an odd number such that
$2^{n_k}+1\leq m=n-1 \leq 2^{n_k+1}-1$;
hence, using Corollary~\ref{cor:w-basic},
$w(n-1)\equiv w(2^{n_k}+1)=w(2^{n_k})+w(2^{n_k-1}) \mod 4$.
Then
$$
w(n) = w(n-1)+w(\sum_{i=0}^k 2^{n_i-1})
\equiv w(2^{n_k})+w(2^{n_k-1})+w(\sum_{i=0}^k 2^{n_i-1}) \mod 4\:.
$$
If $n_0>1$, we can repeat the argument to
obtain (using Corollary~\ref{cor:w-basic} again)
$$
\begin{array}{rcl}
w(n)& = & \ds w(n-1)+w(\sum_{i=0}^k 2^{n_i-1}) \\[6pt]
&\equiv & \ds v({n_k})+2\ v({n_k-1})+v({n_k-2})+w(\sum_{i=0}^k 2^{n_i-2})\mod 4\\[6pt]
&\equiv & \ds 2\ v(n_k)+w(\sum_{i=0}^k 2^{n_i-2}) \mod 4
\;\; \text{ (using Proposition \ref{prop:v-rec})}\\[6pt]
&\equiv & \ds 2 + w(\sum_{i=0}^k 2^{n_i-2}) \mod 4 \:.
\end{array}
$$

We now use this reduction to discuss the different cases for $n_0$.

If $n_0=4j-1$ for some $j\in \N$, then we can use the 2-step
reduction above $2j-1$ times, then the 1-step reduction, and we obtain
(using Corollary~\ref{cor:w-basic}~again)
$$
\begin{array}{rcl}
w(n)&\equiv & \ds 2 + w(2+\sum_{i=1}^k 2^{n_i-n_0+1}) \mod 4\\[6pt]
&\equiv & \ds 2+ w(2^{n_k-n_0+1})+w(2^{n_k-n_0})+w(1+\sum_{i=1}^k 2^{n_i-n_0}) \\[6pt]
&\equiv & 2+ w(2^{n_k-n_0+1})+w(2^{n_k-n_0})+w(1+2^{n_k-n_0}) \\[6pt]
&\equiv & 2+ w(2^{n_k-n_0+1})+2\ w(2^{n_k-n_0})+w(2^{n_k-n_0-1}) \\[6pt]
&\equiv & 2+ 2\ v(n_k-n_0+1) \equiv 0 \mod 4
\;\; \text{ (using Proposition \ref{prop:v-rec})}\:.
\end{array}
$$

In the case $n_0=4j+1$ for some $j\in \N$, we are just doing one less 2-step reduction,
hence in this case it follows that $w(n)\equiv 2 \mod 4$.

When $n_0=4j$ for some $j\in \N$,
we do again $2j-1$ 2-step reductions and obtain
$$
\begin{array}{rcl}
w(n)&\equiv & \ds 2 + w(2^2+\sum_{i=1}^k 2^{n_i-n_0+2}) \mod 4\\[6pt]
&\equiv & \ds 2+  w(3+\sum_{i=1}^k 2^{n_i-n_0+2})+ w(2+\sum_{i=1}^k 2^{n_i-n_0+1})\\[6pt]
&\equiv & 2+  w(2^{n_k-n_0+2}+1)+ 2\\[6pt]
&\equiv & w(2^{n_k-n_0+2})+w(2^{n_k-n_0+1}) \\[6pt]
&\equiv & v({n_k+2})+v({n_k+1}) \mod 4 \:.
\end{array}
$$
With the previous result on the $v$-sequence, the assertion then follows.

When $n_0=0$, we are in the case of an odd $n$, where then (by Corollary~\ref{cor:w-basic})
$$w(n)=w(1+2^{n_k})=w(2^{n_k})+w(2^{n_k-1})$$
and the result is the same as above for $n_0=4j$.

When $n_0=4j-2$ for some $j\in \N$, the result is complementary
to the one above, by a shift of 2, as stated in the assertion.
\end{proof}

\begin{rem}
{\rm 
In Section~3 we have seen that the generating function $W(q)$ of $w(n)$ 
is the even part of $S_1(q)+S_2(q)$. 
The functional equations given in Remark~\ref{Rem3.6} then yield 
$$W(q)=q + \frac{1+q}{1-q}W(q^2)\:.$$
Iterating this equation and considering congruences modulo $2$ and modulo $4$ 
then provides a different route to the congruence results obtained above. 
}
\end{rem}

We close by noting that there may also be very special 
behavior of the $w$-sequence modulo~8. 
(Indeed, the data strongly suggest this.)
Obviously, this would then imply congruences modulo 16 for the numbers~$u(n)$.

\bigskip

{\bf Acknowledgments.}
The first two authors would like to thank the Danish Research Council (FNU)
for the support of their collaboration. 
Thanks go also to a referee for remarks on alternative proofs.

\end{document}